\begin{document}

\theoremstyle{plain}
\newtheorem{thm}{Theorem}
\newtheorem{lmm}{Lemma}
\newtheorem{claim}{Claim}
\newtheorem{prop}{Proposition}
\newtheorem{coro}{Corollary}
\newtheorem*{tm}{Theorem}
\newtheorem*{lm}{Lemma}
\newtheorem*{clm}{Claim}
\newtheorem*{prp}{Proposition}
\newtheorem*{cor}{Corollary}

\theoremstyle{definition}
\newtheorem{dfn}{Definition}
\newtheorem*{df}{Definition}
\newtheorem*{nt}{Notation}
\newtheorem*{obs}{Observation}
\newtheorem*{exm}{Example}
\newtheorem*{exms}{Examples}
\newtheorem{prbl}{Problem}
\newtheorem*{prb}{Problem}

\theoremstyle{remark}
\newtheorem*{pf}{Proof}
\newtheorem*{rmk}{Remark}
\newtheorem*{q}{Question}
\newtheorem*{qs}{Questions}
\newtheorem*{ex}{Example}
\newtheorem*{exs}{Examples}
\newtheorem*{ack}{Acknowledgements}
\newtheorem*{fact}{Fact}
\newtheorem*{facts}{Facts}

\newcommand{\ob}{[}
\newcommand{\cb}{]}

\newcommand{\dom}{ {\mathop{\mathrm {dom\,}}\nolimits} }
\newcommand{\ran}{ {\mathop{\mathrm{ran\,}}\nolimits} }
\newcommand{\Ran}{ {\mathop{\mathrm{Range\,}}\nolimits} }

\newcommand{\cf}{ {\mathop{\mathrm {cf\,}}\nolimits} }
\newcommand{\inter}{ {\mathop{\mathrm {int\,}}\nolimits} }
\newcommand{\cl}{ {\mathop{\mathrm {cl\,}}\nolimits} }
\newcommand{\cll}{ {\mathop{\mathrm {cl^{-}\,}}\nolimits} }
\newcommand{\clu}{ {\mathop{\mathrm {cl^{+}\,}}\nolimits} }
\newcommand{\lcl}{ {\mathop{\mathrm {lcl\,}}\nolimits} }
\newcommand{\rcl}{ {\mathop{\mathrm {rcl\,}}\nolimits} }
\newcommand{\cof}{ {\mathop{\mathrm {cof\,}}\nolimits} }
\newcommand{\add}{ {\mathop{\mathrm {add\,}}\nolimits} }
\newcommand{\sat}{ {\mathop{\mathrm {sat\,}}\nolimits} }
\newcommand{\tc}{ {\mathop{\mathrm {tc\,}}\nolimits} }
\newcommand{\unif}{ {\mathop{\mathrm {unif\,}}\nolimits} }
\newcommand{\uhr}{\!\upharpoonright\!}
\newcommand{\lra}{ {\leftrightarrow} }
\newcommand{\ot}{ {\mathop{\mathrm {ot\,}}\nolimits} }
\newcommand{\pr}{ {\mathop{\mathrm {pr\,}}\nolimits} }
\newcommand{\cnc}{ {^\frown} }
\newcommand{\image}{\/``\,}
\newcommand{\scc}{\beta\!\!\!\!\beta}
\newcommand{\filt}{\varepsilon\!\!\!\!\!\;\varepsilon}
\newcommand{\pws}{P\!\!\!\!\!P}
\newcommand{\ol}{\overline}
\newcommand{\wh}{\widehat}
\newcommand{\wt}{\widetilde}

\newcommand{\supp}{\mathrm{supp}}
\newcommand{\defin}{\mathrm{def}}
\newcommand{\Pos}{\mathrm{Pos}}
\newcommand{\id}{\mathrm{id}}

\pagestyle{headings}

  \title{Ultrafilter extensions do not preserve 
         elementary equivalence}

  \author{Denis I.~Saveliev, Saharon Shelah}
  
  \date{4 May 2013, the last revision 03 Nov 2019}

  %% this is the former manuscript F1338, now B1132}

\thanks{
{\it Mathematical Subject Classification 2010\/}:
Primary 
03C55, % set-theoretical model theory
54D35, % extensions of spaces (compactifications, etc)
54D80, % special constructions of spaces (spaces of ultrafilters, etc)
Secondary 
03C30, % other model constructions
03C80, % logic with extra quantifiers and operations 
54H10} % topological representation of algebraic systems

\thanks{
{\it Keywords\/}:
ultrafilter, 
ultrafilter extension, 
\v{C}ech--Stone compactification, 
first-order model,
elementary equivalence,
elementary embedding, 
ultrafilter quantifier}

\thanks{
{\it Acknowledgment\/}: 
This research was partially supported by 
European Research Council grant~338821. 
The first author was also partially supported by 
Russian Foundation for Basic Research grant~17-01-00705. 
}

\thanks{
Paper~1132 on Shelah's list.}

\begin{abstract}
We show that there exist models $\mathcal M_1$ 
and $\mathcal M_2$ such that $\mathcal M_1$ elementarily 
embeds into~$\mathcal M_2$ but their ultrafilter 
extensions $\scc(\mathcal M_1)$ and $\scc(\mathcal M_2)$ 
are not elementarily equivalent.
\end{abstract}

\maketitle

\section{Introduction}

The ultrafilter extension of a~first-order model is 
a~model in the same vocabulary, the universe of which 
consists of all ultrafilters on the universe of the 
original model, and which extends the latter in 
a~canonical way. This construction was introduced 
in~\cite{Saveliev}. The article~\cite{Saveliev(inftyproc)} 
is an expanded version of~\cite{Saveliev}; it contains 
a~list of problems, one of which is solved here. 

The main precursor of the general construction was 
the ultrafilter extension of semigroups, called often 
the \v{C}ech--Stone compactification of semigroups. 
This particular case was discovered in 1970s and became 
since then an important tool for getting various 
Ramsey-theoretic results in combinatorics, algebra, 
and dynamics; the textbook~\cite{Hindman Strauss} is 
a~comprehensive treatise of this area. For theory of 
ultrafilters and for model theory we refer the reader 
to the standard textbooks \cite{Comfort Negrepontis} 
and~\cite{Chang Keisler}, respectively.

%\newpage 

Recall the construction of ultrafilter extensions
and related basic facts.

\begin{dfn}\label{ultrafilter quantifier} 
For a~set~$M$, an ultrafilter~$D$ on~$M$, and a~formula 
$\varphi(x,\ldots)$ with parameters $x,\ldots$\,, we let
\begin{align*}
(\forall^{D}x)\,\varphi(x,\ldots)
&\;\;\text{if and only if}\;\;
\{a\in M:\varphi(a,\ldots)\}\in D.
\end{align*}
\end{dfn}

It is easy to see that the ultrafilter quantifier is 
self-dual: it coincides with $(\exists^Dx)$, defined 
as $\neg\,(\forall^Dx)\,\neg\,$, since $D$~is ultra.
Note also that if $D$~is the principal ultrafilter given 
by some $a\in M$, then $(\forall^Dx)\,\varphi(x,\ldots)$ 
is reduced to $\varphi(a,\ldots)$, and 
%that if, e.g., $D_1,D_2$ are two ultrafilters on~$M$ then 
that, e.g.,
$
(\forall^{D_1}x_1)(\forall^{D_2}x_2)\,
\varphi(x_1,x_2,\ldots)
$
means 
$
\{a_1\in M:\{a_2\in M:
\varphi(a_1,a_2,\ldots)\}\in D_2\}\in D_1.
$

\begin{dfn}\label{ultrafilter extension}
Let $\mathcal M$~be a~model in a~vocabulary~$\tau$ with 
the universe~$M$. Define the model~$\scc(\mathcal M)$ and 
the function~$j_M$ as follows:
\begin{itemize}
\item[(a)]
the universe of~$\scc(\mathcal M)$ is $\scc(M)$, 
the set of ultrafilters on~$M$,
\item[(b)]
$j_M:M\to\scc(M)$ is such that for all $a\in M$, 
$j_M(a)$~is the principal ultrafilter on~$M$ given by~$a$,
i.e., $j_M(a)=\{A\subseteq M:a\in A\}$,
\item[(c)]
if $P\in\tau$ is an $n$-ary predicate symbol
(other than the equality symbol), let
$$
P^{\scc(\mathcal M)}=\bigl\{(D_1,\ldots,D_{n}):
(\forall^{D_1}x_1)\ldots(\forall^{D_{n}}x_{n})\,
P^{\mathcal M}(x_1,\ldots,x_{n})\bigr\},
$$
\item[(d)]
if $F\in\tau$ is an $n$-ary function symbol, let 
\begin{gather*}
F^{\scc(\mathcal M)}(D_1,\ldots,D_{n})=D
\;\;\text{if and only if}
\\
\bigl(\forall A\subseteq M)\,
\bigl(A\in D
\;\Leftrightarrow\;
(\forall^{D_1}x_1)\ldots(\forall^{D_{n}}x_{n})\,
F^{\mathcal M}(x_1,\ldots,x_{n})\in A\bigr).
\end{gather*}
\end{itemize}
The model $\scc(\mathcal M)$ is the 
{\it ultrafilter extension\/} of the model~$\mathcal M$, 
and $j_M$~is the {\it natural embedding\/}
of $\mathcal M$ into~$\scc(\mathcal M)$.
\end{dfn}

The using of words ``extension'' and ``embedding'' 
is easily justified:

\begin{prop}\label{embedding into ultrafilter extension}
If $\mathcal M$~is a~model in a~vocabulary~$\tau$, then 
\begin{itemize}
\item[(a)]
$\scc(\mathcal M)$~is also a~model in~$\tau$, and
\item[(b)]
$j_M$~isomorphically embeds $\mathcal M$ into~$\scc(\mathcal M)$.
\end{itemize}
\end{prop}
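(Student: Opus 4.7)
For part~(a), the only substantive thing to verify is that clause~(d) of Definition~\ref{ultrafilter extension} indeed defines a function on $\scc(M)^n$, i.e., that for each choice of $D_1, \ldots, D_n \in \scc(M)$ the set
$$
D \;=\; \bigl\{A \subseteq M : (\forall^{D_1}x_1)\ldots(\forall^{D_n}x_n)\, F^{\mathcal M}(x_1,\ldots,x_n) \in A\bigr\}
$$
is itself an ultrafilter on $M$. For part~(b), injectivity of $j_M$ is immediate (principal ultrafilters on distinct points disagree on singletons), so the substance is that $j_M$ preserves the interpretations of predicate and function symbols; both will follow from the reduction noted after Definition~\ref{ultrafilter quantifier}: when $D$ is the principal ultrafilter on $a$, $(\forall^D x)\,\varphi(x,\ldots)$ is equivalent to $\varphi(a,\ldots)$.

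\emph{Verifying (a).} I would check the ultrafilter axioms for $D$ by induction on $n$. Fixing $a_1, \ldots, a_{n-1} \in M$ and $A \subseteq M$, set
$$
A^* \;=\; \{x_n \in M : F^{\mathcal M}(a_1,\ldots,a_{n-1},x_n) \in A\}.
$$
The assignment $A \mapsto A^*$ commutes with unions, intersections, and complements, and sends $\emptyset \mapsto \emptyset$, $M \mapsto M$. Hence, since $D_n$ is an ultrafilter, the set $\{(a_1,\ldots,a_{n-1}) : A^* \in D_n\}$ inherits the full Boolean-plus-ultra behavior in the variable $A$. Iterating this $n$ times, with each layer preserving the structure because each $D_i$ is an ultrafilter, one concludes that $D$ contains $M$, excludes $\emptyset$, is closed under intersection, is upward closed, and is ultra. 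Equivalently, $D$ is the pushforward under $F^{\mathcal M}$ of the Fubini tensor product $D_1 \otimes \cdots \otimes D_n$ on $M^n$, and pushforwards of ultrafilters are ultrafilters.

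\emph{Verifying (b).} Let $a_1, \ldots, a_n \in M$ and $D_i = j_M(a_i)$. Applying the principal-quantifier reduction $n$ times, clause~(c) collapses to
$$
(j_M(a_1),\ldots,j_M(a_n)) \in P^{\scc(\mathcal M)}
\;\Longleftrightarrow\; P^{\mathcal M}(a_1,\ldots,a_n),
$$
and clause~(d) collapses to
$$
F^{\scc(\mathcal M)}(j_M(a_1),\ldots,j_M(a_n))
\;=\; \bigl\{A \subseteq M : F^{\mathcal M}(a_1,\ldots,a_n) \in A\bigr\}
\;=\; j_M\bigl(F^{\mathcal M}(a_1,\ldots,a_n)\bigr).
$$
Together with the obvious injectivity this shows that $j_M$ is an isomorphic embedding of $\mathcal M$ into $\scc(\mathcal M)$.

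\emph{Main obstacle.} The conceptual content sits entirely in the ultrafilter check of~(a): one must be sure that iterated ultrafilter quantification, applied in the prescribed order, genuinely preserves the Boolean operations together with the ultra property. This is routine once the right bookkeeping is set up; recasting it as a pushforward of the tensor product of the $D_i$'s under $F^{\mathcal M}$, and then invoking the standard fact that pushforwards of ultrafilters are ultrafilters, is the cleanest way to sidestep case-by-case verification.
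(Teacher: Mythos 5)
Your argument is correct, and it is essentially the standard one: the paper itself does not prove this proposition but defers to the cited references \cite{Saveliev}, \cite{Saveliev(inftyproc)}, where the verification proceeds exactly as you describe --- clause~(d) defines an ultrafilter because it is the pushforward under $F^{\mathcal M}$ of the Fubini product $D_1\otimes\cdots\otimes D_n$ (whose iterated-quantifier definition matches Definition~\ref{ultrafilter quantifier} in the given order), and part~(b) follows from the principal-quantifier reduction together with the observation that clause~(c) is stated as a biconditional, so the embedding is isomorphic and not merely a homomorphism. The only point worth making explicit if you write this out in full is the (routine) check that the Fubini product itself is an ultrafilter on $M^n$, which is the same Boolean bookkeeping you sketch in the inductive step.
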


\begin{proof}
See~\cite{Saveliev},~\cite{Saveliev(inftyproc)}.
\end{proof}

%\newpage

The following result, called the First Extension Theorem 
in~\cite{Saveliev(inftyproc)}, shows that the ultrafilter 
extension lifts certain relationships between models.

\begin{thm}\label{first extension theorem}
Let $\mathcal M_1$ and $\mathcal M_2$ be two models in the same 
vocabulary with the universes $M_1$ and $M_2$, respectively,
and let $h$~be a~mapping of $M_1$ into~$M_2$ and $\wt h$~its 
(unique) continuous extension of $\scc(M_1)$ into $\scc(M_2)$:
$$
\xymatrix{
\;\scc(\mathcal M_1)\ar@{-->}^{\wt h}[r]&\;\scc(\mathcal M_2)
\\
\mathcal M_1\ar[r]^{h}\ar[u]^{j_{M_1}}&\mathcal M_2\ar[u]^{j_{M_2}}
}
$$
If $h$~is a~homomorphism (epimorphism, isomorphic embedding)
of $\mathcal M_1$ into $\mathcal M_2$, then $\wt h$~is 
a~homomorphism (epimorphism, isomorphic embedding) 
of $\scc(\mathcal M_1)$ into $\scc(\mathcal M_2)$.
\end{thm}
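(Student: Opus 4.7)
The plan is to reduce everything to a single substitution lemma about iterated ultrafilter quantifiers and then read off all three preservation claims directly from the definitions. First I would identify $\widetilde h$ explicitly: since $\scc(M_2)$ is compact Hausdorff and $M_1$ is dense in $\scc(M_1)$, the unique continuous extension of $j_{M_2}\circ h$ is forced to be the push-forward
\[
\widetilde h(D)=\{B\subseteq M_2:h^{-1}(B)\in D\};
\]
this set is an ultrafilter, extends $j_{M_2}\circ h$, and is continuous, so by uniqueness it must coincide with the $\widetilde h$ of the theorem. Specialising the defining equivalence $B\in\widetilde h(D)\Leftrightarrow h^{-1}(B)\in D$ to $B=\{b\in M_2:\psi(b)\}$ yields the one-variable substitution rule $(\forall^{\widetilde h(D)}y)\,\psi(y)\Leftrightarrow(\forall^{D}x)\,\psi(h(x))$. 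An easy induction on~$n$ then gives its multivariable form: for any condition $\psi(y_1,\ldots,y_n)$ on elements of~$M_2$,
\[
(\forall^{\widetilde h(D_1)}y_1)\ldots(\forall^{\widetilde h(D_n)}y_n)\,\psi(y_1,\ldots,y_n)
\;\;\Leftrightarrow\;\;
(\forall^{D_1}x_1)\ldots(\forall^{D_n}x_n)\,\psi(h(x_1),\ldots,h(x_n)).
\]

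With this lemma in hand, assume $h$ is a homomorphism. For an $n$-ary predicate $P$, if $P^{\scc(\mathcal M_1)}(D_1,\ldots,D_n)$ then $P^{\mathcal M_1}(x_1,\ldots,x_n)$ holds under the iterated ultrafilter quantifier; the homomorphism property replaces it by $P^{\mathcal M_2}(h(x_1),\ldots,h(x_n))$, and applying the substitution lemma with $\psi=P^{\mathcal M_2}$ converts the quantifiers to those over $\widetilde h(D_i)$, yielding $P^{\scc(\mathcal M_2)}(\widetilde h(D_1),\ldots,\widetilde h(D_n))$. For an $n$-ary function symbol~$F$, set $D=F^{\scc(\mathcal M_1)}(D_1,\ldots,D_n)$. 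Using the homomorphism identity $h(F^{\mathcal M_1}(x_1,\ldots,x_n))=F^{\mathcal M_2}(h(x_1),\ldots,h(x_n))$ together with the explicit description of $\widetilde h$, one computes that $B\in\widetilde h(D)$ iff $(\forall^{D_1}x_1)\ldots(\forall^{D_n}x_n)\,F^{\mathcal M_2}(h(x_1),\ldots,h(x_n))\in B$; the substitution lemma, applied to $\psi(y_1,\ldots,y_n)\equiv(F^{\mathcal M_2}(y_1,\ldots,y_n)\in B)$, shows this is equivalent to $B$ belonging to $F^{\scc(\mathcal M_2)}(\widetilde h(D_1),\ldots,\widetilde h(D_n))$. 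Hence the two ultrafilters coincide.

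The remaining cases are immediate refinements. If $h$ is surjective, then for any $E\in\scc(M_2)$ the family $\{h^{-1}(B):B\in E\}$ is a filter on $M_1$ (surjectivity of $h$ prevents empty pre-images of nonempty sets), which extends to an ultrafilter $D$ satisfying $\widetilde h(D)\supseteq E$, and hence $\widetilde h(D)=E$ by maximality. If $h$ is an isomorphic embedding, injectivity of $h$ gives $h^{-1}(h(A))=A$, so $A\in D\Leftrightarrow h(A)\in\widetilde h(D)$ separates distinct ultrafilters, while strict preservation of each predicate by $h$ turns every implication in the predicate argument above into an equivalence; thus $\widetilde h$ is again an isomorphic embedding. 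The main obstacle is the careful bookkeeping behind the multivariable substitution lemma, where one must verify that the ultrafilter quantifiers really do commute with substitution $y_i\mapsto h(x_i)$ when nested; once that lemma is in place, the three parts of the theorem unfold directly from Definition~\ref{ultrafilter extension}.
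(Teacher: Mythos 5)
Your proof is correct. Note that the paper itself does not prove this theorem --- it simply cites \cite{Saveliev} and \cite{Saveliev(inftyproc)} --- so there is no in-text argument to compare against; but your reconstruction is a complete and correct self-contained proof along the standard lines of those references. The two load-bearing ingredients are exactly right: the identification of $\wt h$ as the push-forward $\wt h(D)=\{B\subseteq M_2: h^{-1}(B)\in D\}$ (forced by density of the principal ultrafilters and the Hausdorff property of $\scc(M_2)$), and the iterated substitution lemma $(\forall^{\wt h(D_1)}y_1)\ldots(\forall^{\wt h(D_n)}y_n)\,\psi(\bar y)\Leftrightarrow(\forall^{D_1}x_1)\ldots(\forall^{D_n}x_n)\,\psi(h(x_1),\ldots,h(x_n))$, whose induction on $n$ goes through because the one-variable rule applies uniformly in the frozen parameters. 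The predicate case additionally uses the (easily verified) monotonicity of nested ultrafilter quantifiers under pointwise implication, the function case is a clean chain of equivalences, and your treatments of surjectivity (finite intersection property of $\{h^{-1}(B):B\in E\}$ plus maximality) and injectivity ($h^{-1}(h(A))=A$) are both sound.
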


\begin{proof}
See~\cite{Saveliev},~\cite{Saveliev(inftyproc)}.
\end{proof}

Actually Theorem~\ref{first extension theorem} is 
a~special case of a~stronger result, called the 
Second Extension Theorem in~\cite{Saveliev(inftyproc)}. 
Here we omit its precise formulation, which involves
topological concepts, and note only that it generalizes 
the standard topological fact stating that the
\v{C}ech--Stone compactification is the largest one, 
to the case when the underlying discrete space~$M$ 
carries an arbitrary first-order structure. This 
confirms that the construction of ultrafilter extensions 
given in Definition~\ref{ultrafilter extension} is 
canonical in a~certain sense.

Theorem~\ref{first extension theorem} holds also for 
certain other relationships between models (e.g., for
so-called homotopies and isotopies, see
\cite{Saveliev},~\cite{Saveliev(inftyproc)}). 
A~natural task is a~characterization of such relationships. 
In particular, one can ask whether elementary embeddings or 
elementary equivalence lift under ultrafilter extensions. 
This task was posed in~\cite{Saveliev(inftyproc)} (see 
Problem~5.1 there and comments before it).

In this note, we answer this particular question in the 
negative. In fact, we establish a~slightly stronger result:

\begin{thm}[the Main Theorem]\label{main theorem}
There exist models $\mathcal M_1$ and $\mathcal M_2$ in the 
same vocabulary such that $\mathcal M_1$ elementarily embeds 
into~$\mathcal M_2$ but their ultrafilter extensions 
$\scc(\mathcal M_1)$ and $\scc(\mathcal M_2)$ are not 
elementarily equivalent:
$$
\xymatrix{
\;\scc(\mathcal M_1)\ar@{-->}^{\not\equiv}[r]&\;\scc(\mathcal M_2)
\\
\mathcal M_1\ar[r]^{\prec}\ar[u]^{j_{M_1}}&\mathcal M_2\ar[u]^{j_{M_2}}
}
$$
\end{thm}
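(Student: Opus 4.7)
Proof plan. The plan is to exhibit $\tau$-models $\mathcal M_1,\mathcal M_2$ with $\mathcal M_1\prec\mathcal M_2$ together with a first-order $\tau$-sentence $\varphi$ separating their ultrafilter extensions: $\scc(\mathcal M_1)\models\varphi$ while $\scc(\mathcal M_2)\not\models\varphi$ (or the reverse). Since $\mathcal M_1$ and $\mathcal M_2$ share the same first-order theory along the embedding, such a $\varphi$ cannot describe anything purely about the base structures; it must detect a feature of $\scc(\mathcal M)$ that depends on the underlying \emph{set} $M$ beyond its first-order theory, something that genuinely differs between $\mathcal M_1$ and a sufficiently rich elementary extension.

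The most natural such feature is the existence of non-principal ultrafilters invariant under a definable structure map. If $\tau$ contains a unary function symbol $f$, then the first-order equation $f(x)=x$ picks out, in $\scc(\mathcal M)$, exactly the $\wt f$-fixed ultrafilters: principal ones coming from actual fixed points of $f^{\mathcal M}$, and non-principal ones being ultrafilters $D$ on $M$ with $\wt f(D)=D$ that do not concentrate on any finite set. I would arrange $\tau$ and the theory of $\mathcal M_1$ so that the genuine fixed points of $f^{\mathcal M}$ are all identified with a distinguished constant $c$, and take $\varphi$ to be $\exists x\,(f(x)=x\wedge x\neq c)$, which thus expresses precisely ``$\wt f$ has a non-principal fixed point.''

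The construction then proceeds in two steps. First, choose a countable $\mathcal M_1$ whose $f$-orbit structure admits a combinatorial obstruction — parity, divisibility, or density — blocking any non-principal $f$-invariant ultrafilter on $M_1$, so that $\scc(\mathcal M_1)\not\models\varphi$. Second, apply the compactness theorem to produce an elementary extension $\mathcal M_2\succ\mathcal M_1$ that realizes a partial type asserting ``$x$ lies outside every orbit coming from $\mathcal M_1$''; the new elements in $\mathcal M_2$ contribute $f$-orbits of a genuinely new shape, on which a non-principal $f$-invariant ultrafilter can be produced — for instance via Ellis's lemma applied to a compact right-topological sub-semigroup of $\scc(M_2)$ supported on these new orbits, or by a direct Ramsey-theoretic construction. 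This yields $\scc(\mathcal M_2)\models\varphi$ while preserving $\mathcal M_1\prec\mathcal M_2$.

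The main obstacle is the combinatorial analysis on both sides: ruling out non-principal $\wt f$-fixed ultrafilters in $\scc(\mathcal M_1)$ and simultaneously guaranteeing them in $\scc(\mathcal M_2)$. Many natural candidates for $f$ fail one side or the other — e.g.\ the successor on $\omega$ or on $\mathbb{Z}$ admits no shift-invariant ultrafilters whatsoever by a parity argument, so the asymmetry collapses; other choices admit invariant ultrafilters on any infinite piece, so the obstruction on $M_1$ evaporates. Thus one must engineer $f$, the first-order theory of $\mathcal M_1$, and possibly auxiliary predicates or a binary operation (so that one can invoke Ellis-type idempotent arguments selectively) so that the obstruction on $M_1$ is first-order invisible — hence survives to $\mathcal M_2$ — yet is overcome precisely by the new orbit structure forced into $\mathcal M_2$ by compactness. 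Getting this delicate balance right, and verifying it with an explicit combinatorial argument, is the crux of the proof.
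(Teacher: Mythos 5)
Your proposal is a program rather than a proof, and its central mechanism cannot work. The sentence you build everything around, $\exists x\,(f(x)=x\wedge x\neq c)$, asks for a non-principal ultrafilter $D$ on $M$ with $\wt f(D)=D$ in a model whose only genuine fixed point of $f$ is $c$. By Kat\v{e}tov's three-set lemma, any fixed-point-free $g:X\to X$ admits a partition $X=A_0\cup A_1\cup A_2$ with $A_i\cap g[A_i]=\emptyset$ for each $i$; for any ultrafilter $D$ some $A_i\in D$, while $g[A_i]\in\wt g(D)$ (because $g^{-1}[g[A_i]]\supseteq A_i$) and $g[A_i]\notin D$, so $\wt g(D)\neq D$. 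Applying this to $f$ off its fixed-point set shows that the fixed points of $\wt f$ in $\scc(\mathcal M)$ are exactly the ultrafilters concentrated on the closure of $\mathrm{Fix}(f^{\mathcal M})$; when that set is the singleton $\{c\}$ --- a first-order condition, hence inherited by every elementary extension $\mathcal M_2\succ\mathcal M_1$ --- your sentence is false in $\scc(\mathcal M_2)$ exactly as in $\scc(\mathcal M_1)$. The ``parity obstruction'' you observe for the successor map is not a defect of that example to be engineered away; it is fully general, so no amount of orbit surgery in $\mathcal M_2$ can produce the non-principal $\wt f$-fixed ultrafilter your plan requires. On top of this, the proposal explicitly defers the entire construction and its verification (``the crux of the proof'') to unwritten combinatorics, so even if the mechanism were sound there would be nothing here to check.

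For contrast, the paper's separating sentence exploits a different, and genuinely available, non-elementary feature: a cardinality gap rather than dynamics of a unary map. The vocabulary carries a \emph{binary} function $F$ acting as a commutative (unordered) pairing function on each of two predicates $P_1,P_2$; the formula $\varphi_i(x)$ saying ``$P_i(x)$ and $F(x,y)=F(y,x)$ for all $y$ in $P_i$'' is satisfied in the ultrafilter extension exactly by the \emph{principal} ultrafilters from $P_i$, because for distinct non-principal $D_1,D_2$ the iterated ultrafilter quantifiers do not commute: splitting along the linear order $R_2$ one gets $F^{\scc(\mathcal M)}(D_1,D_2)\neq F^{\scc(\mathcal M)}(D_2,D_1)$ even though $F$ is commutative on the base. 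With principality thus made first-order definable in the extension, the sentence $\psi$ asserts that any two distinct $P_1$-points are separated, via the membership relation $R_1$, by a principal $P_2$-point. This holds in $\scc(\mathcal M_1)$ because there $P_2=\mathcal P(\mathbb N)$ is the full power set of $P_1=\mathbb N$, but fails in $\scc(\mathcal M_2)$ for an elementary extension with $|P_1^{\mathcal M_2}|=|P_2^{\mathcal M_2}|=\lambda$ by pure counting: there are $2^{2^\lambda}$ ultrafilters on $P_1^{\mathcal M_2}$ but only $2^\lambda$ possible traces on $P_2^{\mathcal M_2}$, so two distinct ultrafilters are inseparable. If you want to salvage your outline, the lesson is that the first-order-invisible asymmetry to aim for is set-theoretic size, and that the hard step is finding a formula that pins down the principal ultrafilters.
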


Of course, it follows that neither elementary embeddings 
nor elementary equivalence are preserved under ultrafilter 
extensions. The construction of such models $\mathcal M_1$ 
and~$\mathcal M_2$ will be provided in the next section. 

We conclude this section with the following natural 
questions on possible general results in this direction. 

\begin{prbl}
Characterize (or at least, provide interesting 
necessary or sufficient conditions on) theories~$T$ 
such that the implication 
$$
\mathcal M_1\equiv\mathcal M_2
\;\;\Rightarrow\;\;
\scc(\mathcal M_1)\equiv\scc(\mathcal M_2)
$$
holds for all $\mathcal M_1,\mathcal M_2\vDash T$.
\end{prbl}

\begin{prbl}
The same question for elementary embeddings.
\end{prbl}

%\newpage 

\section{Proof of the Main Theorem}

First we define a~vocabulary~$\tau$ and construct two 
specific models $\mathcal M_1$ and $\mathcal M_2$ in~$\tau$. 
Then we shall show that these models are as required.

\begin{dfn}
Let $\tau$ be the vocabulary consisting of 
two unary predicate symbols $P_1$ and~$P_2$, 
two binary predicate symbols $R_1$ and~$R_2$, 
and one binary function symbol~$F$.
\end{dfn}

\begin{dfn}\label{model 1}
Let $\mathcal M_1$ be a~model in~$\tau$ having
the universe~$M_1$ and defined as follows:
\begin{itemize}
\item[(a)]
$M_1=\mathbb N\sqcup\mathcal P(\mathbb N)$, 
the disjoint sum of $\mathbb N$ and $\mathcal P(\mathbb N)$
(which we shall identify with their disjoint copies),
\item[(b)]
$P^{\mathcal M_1}_1=\mathbb N$,
\item[(c)]
$
P^{\mathcal M_1}_2=
\mathcal P(\mathbb N)
$,
\item[(d)]
$
R^{\mathcal M_1}_1=
\{(n,a):
n\in\mathbb N\wedge
a\in\mathcal P(\mathbb N)\wedge 
n\in a\}
$,
i.e., the intersection of the membership relation
with $\mathbb N\times\mathcal P(\mathbb N)$,
\item[(e)]
$R^{\mathcal M_1}_2$ is a~relation such that
\begin{itemize}
\item[($\alpha$)]
$R^{\mathcal M_1}_2\cap(\mathbb N\times\mathbb N)$ 
is the usual order on~$\mathbb N$,
\item[($\beta$)]
$
R^{\mathcal M_1}_2\cap
(\mathcal P(\mathbb N)\times\mathcal P(\mathbb N))$ 
is a~linear order on~$\mathcal P(\mathbb N)$ with 
no endpoints,
\item[($\gamma$)] 
if $a\in\mathbb N\Leftrightarrow b\notin\mathbb N$ 
%if $a\in\mathbb N$ and $b\in\mathcal P(\mathbb N)$, 
%or conversely, 
then $R^{\mathcal M_1}_2(a,b)$ is defined 
arbitrarily (really this case will not be used),
\end{itemize}
\item[(f)]
$F^{\mathcal M_1}$~is an unordered pairing function 
mapping $\mathbb N$ into $\mathbb N$ and 
$\mathcal P(\mathbb N)$ into $\mathcal P(\mathbb N)$, 
i.e., satisfying the following conditions: 
\begin{itemize}
\item[($\alpha$)]
if 
either $a_1,b_1,a_2,b_2\in\mathbb N$ 
or $a_1,b_1,a_2,b_2\in\mathcal P(\mathbb N)$, 
then
$$
F^{\mathcal M_1}(a_1,b_1)=
F^{\mathcal M_1}(a_2,b_2)
\;\Leftrightarrow\;
\{a_1,b_1\}=\{a_2,b_2\},
$$
\item[($\beta$)]
if $a,b\in\mathbb N$ then
$F^{\mathcal M_1}(a,b)\in\mathbb N$,
\item[($\gamma$)]
if $a,b\in\mathcal P(\mathbb N)$ then
$F^{\mathcal M_1}(a,b)\in\mathcal P(\mathbb N)$, 
\item[($\delta$)]
if $a\in\mathbb N\Leftrightarrow b\notin\mathbb N$ 
%if $a\in\mathbb N$ and $b\in\mathcal P(\mathbb N)$, 
%or conversely, 
then $F^{\mathcal M_1}(a,b)$ is defined 
arbitrarily (really this case will not be used).
\end{itemize}
\end{itemize}
\end{dfn}

%\newpage 

\begin{prop}\label{model 2}
Assume $\lambda\ge2^{\aleph_0}$. Then 
there exists a~model~$\mathcal M_2$ in~$\tau$ 
such that $\mathcal M_1\prec\mathcal M_2$ and 
$|P^{\mathcal M_2}_1|=|P^{\mathcal M_2}_2|=\lambda$.
\end{prop}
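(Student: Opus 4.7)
The plan is to build $\mathcal{M}_2$ by a routine compactness plus Löwenheim--Skolem argument. Let $L$ be the language obtained from $\tau$ by adding a constant symbol $\underline{a}$ for every $a \in M_1$, together with $\lambda$ new constants $c^1_\alpha$ and $c^2_\alpha$ for $\alpha < \lambda$. Let $T$ consist of the elementary diagram of $\mathcal{M}_1$ (in the expanded language where each $\underline{a}$ is interpreted as $a$) together with the sentences
\[
P_1(c^1_\alpha),\quad P_2(c^2_\alpha),\quad c^k_\alpha \neq c^k_\beta \quad (k\in\{1,2\},\ \alpha<\beta<\lambda).
\]

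First I would check that $T$ is finitely satisfiable in $\mathcal{M}_1$. Any finite subset of $T$ mentions only finitely many $c^1_\alpha$ and $c^2_\alpha$; since $P_1^{\mathcal{M}_1} = \mathbb{N}$ and $P_2^{\mathcal{M}_1} = \mathcal{P}(\mathbb{N})$ are both infinite, we may interpret the mentioned new constants as distinct elements of the appropriate $P_k^{\mathcal{M}_1}$, and $\mathcal{M}_1$ (with the canonical interpretation of the $\underline{a}$'s) satisfies the finite fragment. By the compactness theorem, $T$ has a model $\mathcal{N}$. Restricting to $\tau$ and using the standard diagram lemma, $\mathcal{N}\!\uhr\!\tau$ is an elementary extension of $\mathcal{M}_1$ in which both $P_1$ and $P_2$ have cardinality at least $\lambda$.

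Next I would cut $\mathcal{N}$ down to size $\lambda$. The expanded language $L$ has cardinality $|M_1| + \lambda = 2^{\aleph_0} + \lambda = \lambda$ (this is the one point where the hypothesis $\lambda \geq 2^{\aleph_0}$ is used). By the downward Löwenheim--Skolem--Tarski theorem applied to $\mathcal{N}$ in the language $L$, there is an elementary submodel $\mathcal{M}_2^+ \prec \mathcal{N}$ with $|\mathcal{M}_2^+| = \lambda$ that contains the interpretations of all constants of $L$, in particular all the $c^k_\alpha$ and all the $\underline{a}$ for $a \in M_1$. Taking $\mathcal{M}_2$ to be its reduct to $\tau$ gives $\mathcal{M}_1 \prec \mathcal{M}_2$, $|\mathcal{M}_2| = \lambda$, and $|P_k^{\mathcal{M}_2}| \geq \lambda$ for $k=1,2$; since $P_k^{\mathcal{M}_2} \subseteq \mathcal{M}_2$, we get equality.

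There is no real obstacle here: the argument is entirely standard. The only subtlety worth flagging in the write-up is the cardinal arithmetic $|L| = \lambda$, which is exactly why the hypothesis $\lambda \geq 2^{\aleph_0}$ appears in the statement.
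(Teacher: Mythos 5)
Your argument is correct and is essentially the paper's own proof: the paper's second (``alternative'') argument is exactly this one --- add $\lambda$ new constants forced into $P_1$ and $P_2$ to the elementary diagram of $\mathcal M_1$, apply compactness, and cut down to cardinality $\lambda$ by downward L\"owenheim--Skolem. (The paper's first argument instead passes through a $\lambda$-saturated elementary extension, but the two routes are interchangeable here.)
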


\begin{proof}
Let $\mathcal M_3$ be $\lambda$-saturated 
and $\mathcal M_1\prec\mathcal M_3$. 
By the $\lambda$-saturatedness, 
for each $i\in\{1,2\}$ we have 
$|P^{\mathcal M_3}_i|\ge\lambda$, 
so we can pick  
$A_i\subseteq P^{\mathcal M_3}_i$ with $|A_i|=\lambda$.
By the downward L\"owenheim--Skolem Theorem, there exists
a~model~$\mathcal M_2$ with the universe~$M_2$ such that:
\begin{itemize}
\item[(a)]
$\mathcal M_2\prec\mathcal M_3$,
\item[(b)]
$M_1\cup A_1\cup A_2\subseteq M_2$, 
\item[(c)]
$|M_2|=\lambda$,
\end{itemize}
whence it follows that $\mathcal M_2$ is a~required model.

Alternatively, we can use a~version of the upward 
L\"owenheim--Skolem Theorem by picking two sets of 
constants, $C_1$ and~$C_2$, with $|C_1|=|C_2|=\lambda$ 
and adding to the elementary diagram of~$\mathcal M_1$ 
the formulas $P_i(c_i)$ for all $c_i\in C_i$, 
$i\in\{1,2\}$.
The obtained theory is consistent (by compactness), 
so extract its submodel of cardinality~$\lambda$ 
(by the downward L\"owenheim--Skolem Theorem) and 
reduce it to the required model~$\mathcal M_2$ 
in the original vocabulary~$\tau$. 
\end{proof}

Clearly, this observation is of a~general character;
a~similar argument allows to get, for any model, its 
elementary extension in which all predicate symbols 
are interpreted by relations of the same cardinality.

To simplify reading, we slightly shorthand the 
notation for the ultrafilter extensions of the models
$\mathcal M_1$ and $\mathcal M_2$ as follows:

\begin{dfn}\label{notation}
For $\ell\in\{1,2\}$, let
\begin{itemize}
\item[(a)]
$\mathcal N_\ell=\scc(\mathcal M_\ell)$,
\item[(b)]
$N_\ell=\scc(M_\ell)$,
\item[(c)]
$j_\ell=j_{M_\ell}$.
\end{itemize}
\end{dfn}

%\newpage

It is easy to observe the following:

\begin{itemize}
\item[(a)]
$P^{\mathcal N_\ell}_1$ consists of all ultrafilters~$D$ 
on $M_\ell$ such that $P^{\mathcal M_\ell}_1\in D$
(so for $\ell=1$ this means $\mathbb N\in D$), and
$
P^{\mathcal N_\ell}_1\setminus
\{j_\ell(n):n\in P^{\mathcal M_\ell}_1\}
$
consists of all such non-principal ultrafilters,
\item[(b)]
$P^{\mathcal N_\ell}_2$ consists of all ultrafilters~$D$ 
on $M_\ell$ such that $P^{\mathcal M_\ell}_2\in D$
(so for $\ell=1$ this means $\mathcal P(\mathbb N)\in D$), 
and $
P^{\mathcal N_\ell}_2\setminus
\{j_\ell(A):A\in P^{\mathcal M_\ell}_2\}
$
consists of all such non-principal ultrafilters.
\end{itemize}

%\newpage 

Now we are going to construct a~specific sentence~$\psi$ 
which will be satisfied in $\mathcal N_1$ but not
in~$\mathcal N_2$. First we define two auxiliary 
formulas $\varphi_1$ and~$\varphi_2$.

\begin{dfn}\label{definition phi}
For $i\in\{1,2\}$, let $\varphi_i(x)$ be 
the following formula in~$\tau$:
$$ 
P_i(x)\wedge\forall y\,(P_i(y)\to F(x,y)=F(y,x)).
$$
\end{dfn}

Thus $\varphi_i(x)$ means that $x$~is in the center 
in a~sense. Actually, only $\varphi_2$ will be used 
to construct~$\psi$.

\begin{prop}\label{properties phi}
Assume $i,\ell\in\{1,2\}$.
For every $D\in N_\ell$,
$$
\mathcal N_\ell\vDash\varphi_i(D) 
\;\;\text{if and only if}\;\;
D\in\bigl\{j_\ell(a):a\in P^{\mathcal M_\ell}_i\bigr\}.
$$
\end{prop}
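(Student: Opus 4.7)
The statement is an ``if and only if'', so I treat the two directions separately.

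For $(\Leftarrow)$: suppose $D=j_\ell(a)$ with $a\in P_i^{\mathcal M_\ell}$. First, $\mathcal N_\ell\vDash P_i(D)$ because $P_i^{\mathcal M_\ell}\in j_\ell(a)$. For the centrality clause, take any $E\in N_\ell$ with $P_i^{\mathcal M_\ell}\in E$. Reducing the principal ultrafilter quantifier in Definition~\ref{ultrafilter extension}(d), $F^{\scc(\mathcal M_\ell)}(j_\ell(a),E)$ contains a set $A$ iff $\{y:F^{\mathcal M_\ell}(a,y)\in A\}\in E$, while $F^{\scc(\mathcal M_\ell)}(E,j_\ell(a))$ contains $A$ iff $\{y:F^{\mathcal M_\ell}(y,a)\in A\}\in E$. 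By clause \ref{model 1}(f)($\alpha$), transferred to $\mathcal M_2$ by elementarity, $F^{\mathcal M_\ell}(a,y)=F^{\mathcal M_\ell}(y,a)$ whenever $a,y\in P_i^{\mathcal M_\ell}$, so the two defining sets coincide on $P_i^{\mathcal M_\ell}\in E$; the two ultrafilters are therefore equal.

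For $(\Rightarrow)$, I argue contrapositively, assuming $D\notin\{j_\ell(a):a\in P_i^{\mathcal M_\ell}\}$. If $P_i^{\mathcal M_\ell}\notin D$ then already $\mathcal N_\ell\not\vDash P_i(D)$. Otherwise $P_i^{\mathcal M_\ell}\in D$ with $D$ non-principal, and I construct $E$ with $P_i^{\mathcal M_\ell}\in E$ witnessing $F^{\scc(\mathcal M_\ell)}(D,E)\ne F^{\scc(\mathcal M_\ell)}(E,D)$. Let $\kappa=\min\{|Z|:Z\in D\}$ and pick $Z_0\in D$ with $Z_0\subseteq P_i^{\mathcal M_\ell}$ and $|Z_0|=\kappa$, so by minimality $D$ omits every subset of $Z_0$ of cardinality less than $\kappa$. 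Split $Z_0=X\sqcup Y$ into two pieces of cardinality $\kappa$, relabel so $X\in D$, and let $E$ be any uniform ultrafilter on $Y$ (extending the co-$({<}\kappa)$-filter), so $Y\in E$, $X\notin E$, and $P_i^{\mathcal M_\ell}\in E$. Fix well-orderings $X=\{a_\alpha:\alpha<\kappa\}$ and $Y=\{b_\alpha:\alpha<\kappa\}$, and define
$$A=\{F^{\mathcal M_\ell}(a_\alpha,b_\beta):\alpha<\beta<\kappa\}.$$
Using the injectivity of $F^{\mathcal M_\ell}$ on unordered pairs (clause \ref{model 1}(f)($\alpha$)), for $u\in P_i^{\mathcal M_\ell}$ the set $\{v:F^{\mathcal M_\ell}(u,v)\in A\}$ equals $\{b_\beta:\beta>\alpha\}$ if $u=a_\alpha\in X$, equals $\{a_\alpha:\alpha<\beta\}$ if $u=b_\beta\in Y$, and is empty otherwise. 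Uniformity of $E$ places the first set in $E$ (its complement in $Y$ has size $<\kappa$), while the minimality of $\kappa$ keeps the second set outside $D$. Hence $\{u:\{v:F^{\mathcal M_\ell}(u,v)\in A\}\in E\}\supseteq X\in D$, giving $A\in F^{\scc(\mathcal M_\ell)}(D,E)$, whereas $\{u:\{v:F^{\mathcal M_\ell}(u,v)\in A\}\in D\}\cap P_i^{\mathcal M_\ell}=\emptyset$, giving $A\notin F^{\scc(\mathcal M_\ell)}(E,D)$.

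The main obstacle is the symmetry of $F^{\mathcal M_\ell}$ on $P_i^{\mathcal M_\ell}$, which makes the preimage $\{(u,v):F^{\mathcal M_\ell}(u,v)\in A\}$ automatically swap-symmetric; the distinction between $F^{\scc}(D,E)$ and $F^{\scc}(E,D)$ must therefore come from a Fubini-style failure of the tensor products $D\otimes E\ne E\otimes D$. The combination of a minimal-cardinality witness $Z_0\in D$, a uniform companion ultrafilter $E$ on an equipotent disjoint piece $Y$, and the well-order-indexed ``upper-triangular'' set converts the standard Fubini failure on $\kappa\times\kappa$ into a genuine asymmetric witness inside $M_\ell$, while the injectivity of the unordered pairing is precisely what prevents $F^{\mathcal M_\ell}$ from collapsing that asymmetry.
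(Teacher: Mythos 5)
Your proof is correct, and your backward direction (symmetry at principal ultrafilters via reduction of the principal quantifier, then agreement of the two image ultrafilters on $P_i^{\mathcal M_\ell}\in E$) is essentially the paper's Lemma~\ref{lemma 2}; likewise your first contrapositive case is Lemma~\ref{lemma 1}. For the substantive forward direction, however, you take a genuinely different route. The paper (Lemmas \ref{lemma 3} and~\ref{lemma 4}) exploits the linear order $R_2$ that was built into the vocabulary for exactly this purpose: given a non-principal $D_1$ concentrated on $P_i^{\mathcal M_\ell}$, it analyses the cut $(I_{D_1},J_{D_1})$ that $D_1$ induces on $(P_i^{\mathcal M_\ell},R_2^{\mathcal M_\ell})$, picks a second non-principal ultrafilter $D_2$ converging to the same cut from the same side, and separates $F^{\mathcal N_\ell}(D_1,D_2)$ from $F^{\mathcal N_\ell}(D_2,D_1)$ by the two sets of $F$-values of pairs taken in increasing versus decreasing order. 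You ignore $R_2$ entirely and run a pure cardinality argument: the minimal cardinality $\kappa$ of a member of $D$, a uniform companion ultrafilter on an equipotent disjoint piece, and the upper-triangular set $A=\{F(a_\alpha,b_\beta):\alpha<\beta\}$ indexed by an external well-ordering. Both arguments rest on the same two pillars --- injectivity of the unordered pairing on $P_i^{\mathcal M_\ell}$ (transferred to $\mathcal M_2$ by elementarity) and a Fubini-type asymmetry of iterated ultrafilter quantifiers --- but yours is more self-contained: it never uses $R_2$, so it proves the proposition for any model equipped only with the pairing function, and it treats all four cases $i,\ell\in\{1,2\}$ uniformly instead of doing $\mathbb N$ first and then generalizing to arbitrary linear orders via cuts. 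What the paper's choice buys is that its witnessing sets are defined from the structure itself, in line with its use of ultrafilter extensions of ordered sets; your witness $A$ is handed to you by the axiom of choice. Two small points you should make explicit: the minimum $\kappa$ is attained by a subset of $P_i^{\mathcal M_\ell}$ (intersect a minimal-size member of $D$ with $P_i^{\mathcal M_\ell}\in D$), and $\kappa$ is infinite because $D$ is non-principal, which is what legitimizes splitting $Z_0$ into two pieces of size $\kappa$ and extending the co-${<}\kappa$ filter on $Y$ to an ultrafilter.
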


\begin{proof}
This follows from the four lemmas below.

\begin{lmm}\label{lemma 1}
If $D\notin P^{\mathcal N_\ell}_i$ then 
$\mathcal N_\ell\vDash\neg\,\varphi_i(D)$. 
\end{lmm}

\begin{proof}
By the first conjunct in~$\varphi_i$.
\end{proof}

\begin{lmm}\label{lemma 2}
If $D_1\in P^{\mathcal N_\ell}_i$ and 
$D_2=j_\ell(a)$ for some $a\in P^{\mathcal M_\ell}_i$, 
then 
$$
\mathcal N_\ell\vDash F(D_1,D_2)=F(D_2,D_1).
$$
\end{lmm}

\begin{proof}
We must check that 
$
F^{\mathcal N_\ell}(D_1,D_2)=
F^{\mathcal N_\ell}(D_2,D_1).
$ 
It suffices to show that, for any 
$A\subseteq P^{\mathcal M_\ell}_i$, 
the following equivalence holds: 
$$ 
A\in F^{\mathcal N_\ell}(D_1,D_2)
\;\Leftrightarrow\;
A\in F^{\mathcal N_\ell}(D_2,D_1).
$$

By Definition~\ref{ultrafilter extension}, we have
$$
A\in F^{\mathcal N_\ell}(D_1,D_2)
\;\Leftrightarrow\;
(\forall^{D_1}x_1)(\forall^{D_2}x_2)\,
F^{\mathcal M_\ell}(x_1,x_2)\in A. 
$$
But $D_2=j_\ell(a)$ for an $a\in P^{\mathcal M_\ell}_i$, 
i.e., $D_2$~is a~principal ultrafilter given by~$a$. 
Hence $\forall^{D_2}x_2$ is reduced by replacing 
the bounded occurrence of the variable~$x_2$ with~$a$ 
(as we have noted after 
Definition~\ref{ultrafilter quantifier}), 
whence we get 
$$
A\in F^{\mathcal N_\ell}(D_1,D_2)
\;\Leftrightarrow\;
(\forall^{D_1}x_1)\,
F^{\mathcal M_\ell}(x_1,a)\in A.
$$
Similarly we get
$$
A\in F^{\mathcal N_\ell}(D_2,D_1)
\;\Leftrightarrow\;
(\forall^{D_1}x_1)\,
F^{\mathcal M_\ell}(a,x_1)\in A.
$$
Since $a\in P^{\mathcal M_\ell}_i$, we have 
$F^{\mathcal M_\ell}(a,b)=F^{\mathcal M_\ell}(b,a)$ 
for all $b\in P^{\mathcal M_\ell}_i$
by Definition~\ref{model 1}(f)($\alpha$). And since 
$P^{\mathcal M_\ell}_i\in D_1$, the required 
equivalence follows. 
\end{proof}

%\newpage 

\begin{lmm}\label{lemma 3}
If 
$
D_1\in 
P^{\mathcal N_1}_1\setminus
\{j_1(n):n\in P^{\mathcal M_1}_1\}
$,
then there exists $D_2\in P^{\mathcal N_1}_1$ such that
$$
F^{\mathcal N_1}(D_1,D_2)\ne F^{\mathcal N_1}(D_2,D_1).
$$
\end{lmm}

\begin{proof}
Actually we shall prove a bit stronger assertion:
if
$
D_1,D_2\in 
P^{\mathcal N_1}_1\setminus
\{j_1(n):n\in P^{\mathcal M_1}_1\}
$
are such that $D_1\ne D_2$, then 
$$ 
F^{\mathcal N_1}(D_1,D_2)\ne F^{\mathcal N_1}(D_2,D_1). 
$$

So assume that $D_1,D_2$ are distinct non-principal 
ultrafilters on~$M_1$ such that $\mathbb N\in D_1\cap D_2$. 
By $D_1\ne D_2$, there is $A_1\in\mathcal P(\mathbb N)$ such 
that $A_1\in D_1$ and $A_2=\mathbb N\setminus A_1\in D_2$.
Let 
\begin{align*}
B_1&=
\bigl\{F^{\mathcal M_1}(n_1,n_2):
n_1\in A_1
\wedge 
n_2\in A_2 
\wedge 
(n_1,n_2)\in R^{\mathcal M_1}_2
\bigr\},
\\
B_2&=
\bigl\{F^{\mathcal M_1}(n_1,n_2):
n_1\in A_1 
\wedge 
n_2\in A_2 
\wedge 
(n_2,n_1)\in R^{\mathcal M_1}_2
\bigr\}.
\end{align*}
Recall that 
$R^{\mathcal M_1}_2\cap(\mathbb N\times\mathbb N)$ is 
the usual order~$<$ on~$\mathbb N$, so the last 
conjuncts in the definition of $B_1$ and $B_2$ mean 
just $n_1<n_2$ and $n_2<n_1$, respectively.

Now our stronger assertion clearly follows 
from claims (a)--(c) below:
\begin{itemize}
\item[(a)]
$B_1\cap B_2=\emptyset$,
\item[(b)]
$B_1\in F^{\mathcal N_1}(D_1,D_2)$,
\item[(c)]
$B_2\in F^{\mathcal N_1}(D_2,D_1)$.
\end{itemize}
It remains to verify these claims.

For~(a), note that 
if there is some $c\in B_1\cap B_2$, then: 
\begin{itemize}
\item[($\alpha$)]
since $c\in B_1$, we can find $n_1<n_2$ such that 
$F^{\mathcal M_1}(n_1,n_2)=c$,
$n_1\in A_1$, 
$n_2\in A_2$, 
\item[($\beta$)]
since $c\in B_2$, we can find $m_2<m_1$ such that 
$F^{\mathcal M_1}(m_1,m_2)=c$,
$m_1\in A_1$, 
$m_2\in A_2$. 
\end{itemize}
So, since by Definition~\ref{model 1}(f)($\alpha$),
$F^{\mathcal M_1}$~is an unordered pairing function, 
we conclude $\{n_1,n_2\}=\{m_1,m_2\}$. However, then 
$n_1<n_2$ and $m_2<m_1$ imply $n_1=m_2$ and $n_2=m_1$, 
which contradicts to $n_1\in A_1$, $m_2\in A_2$.

For~(b), 
note that $\{n_2\in A_2:n_2>n_1\}\in D_2$ because of
$A_2\in D_2$ and $D_2$~is non-principal. It follows
$(\forall^{D_2}n_2)\,F(n_1,n_2)\in B_1$.
But $A_1\in D_1$, so we get
$$ 
(\forall^{D_1}n_1)(\forall^{D_2}n_2)\,
F(n_1,n_2)\in B_1.
$$
By Definition~\ref{ultrafilter extension}(d), 
this gives claim~(b).

For~(c), argue similarly.
\end{proof}

%\newpage 

The fourth lemma (and its proof) generalizes 
the previous one.

\begin{lmm}\label{lemma 4}
If $i,\ell\in\{1,2\}$ and
$
D_1\in 
P^{\mathcal N_\ell}_i\setminus
\{j_\ell(a):a\in P^{\mathcal M_\ell}_i\},
$
then there exists $D_2\in P^{\mathcal N_\ell}_i$ 
such that
$$ 
F^{\mathcal N_\ell}(D_1,D_2)\ne 
F^{\mathcal N_\ell}(D_2,D_1). 
$$
\end{lmm}

\begin{proof}
Let $D_1$~be a~non-principal ultrafilter 
on~$P^{\mathcal M_\ell}_i$. It follows from 
Definition~\ref{model 1}(e) and 
$\mathcal M_1\prec\mathcal M_2$ that
$R^{\mathcal M_\ell}_2$~is a~linear order 
on~$P^{\mathcal M_\ell}_i$. 
One of the two following possibilities occurs: 
\begin{itemize}
\item[(a)]
there is an initial segment~$I$ of 
the linearly ordered set 
$(P^{\mathcal M_\ell}_i,R^{\mathcal M_\ell}_2)$ 
such that $I\in D_1$ but if $I_1\subset I$ 
is another initial segment of the set
then $I_1\notin D_1$ 
(this $I$ necessarily has no last element); 
\item[(b)]
there is a~final segment~$J$ of 
the linearly ordered set 
$(P^{\mathcal M_\ell}_i,R^{\mathcal M_\ell}_2)$ 
such that $J\in D_1$ but if $J_1\subset J$ 
is another final segment of the set
then $J_1\notin D_1$ 
(this $J$ necessarily has no first element). 
\end{itemize}

To see, notice the following general facts.
If $(X,<)$ is a~linearly ordered set, for 
any ultrafilter~$D$ on~$X$ define the initial 
segment $I_D$ and the final segment $J_D$ 
of $(X,<)$ as follows:
\begin{align*}
I_D
&=\,
\bigcap\,\{
I\in D:
I\text{ is an initial segment of }(X,<)\},
\\
J_D
&=\,
\bigcap\,\{
J\in D:
J\text{ is a~final segment of }(X,<)\}.
\end{align*}
As easy to see, if $D$~is principal then 
$I_D\cap J_D=\{x\}$ for $\{x\}\in D$; and 
if $D$~is non-principal then $(I_D,J_D)$ is a~cut 
and either $I_D$ or $J_D$, but not both, is in~$D$.
Furthermore, if $I_D$ is in~$D$, then so are 
all final segments of~$I_D$, $S\cap I_D$ is cofinal 
in~$I_D$ for all $S\in D$, and $I_D$~does not have 
a~greatest element whenever $D$~is non-principal; 
and symmetrically for $J_D$ in~$D$.
(More details related to ultrafilter extensions 
of linearly ordered sets can be found
in~\cite{Saveliev(orders)}.)

In our situation, $D_1$~is non-principal, so we have 
either $I_{D_1}\in D_1$, in which case we get 
possibility~(a) with $I=I_{D_1}$, or $J_{D_1}\in D_1$, 
in which case we get possibility~(b) with $J=J_{D_1}$.

For~(a), choose an ultrafilter~$D_2$ 
on~$P^{\mathcal M_\ell}_i$ such that 
\begin{itemize}
\item[($\alpha$)]
$I\in D_2$,
\item[($\beta$)]
if $I_1\subset I$ is an initial segment of 
$(P^{\mathcal M_\ell}_i,R^{\mathcal M_\ell}_2)$ 
then $I_1\notin D_2$, 
\item[($\gamma$)]
$D_2\ne D_1$.
\end{itemize}
Now we can repeat the proof of Lemma~\ref{lemma 3} 
mutatis mutandis, i.e., we can find
$A_1\in D_1\setminus D_2$ such that $A_1\subseteq I$ 
and $A_2=I\setminus A_1\in D_2$, then define
\begin{align*}
B_1&=
\bigl\{F^{\mathcal M_\ell}(a_1,a_2):
a_1\in A_1
\wedge 
a_2\in A_2
\wedge 
(a_1,a_2)\in R^{\mathcal M_\ell}_2 
\bigr\}, 
\\
B_2&=
\bigl\{F^{\mathcal M_\ell}(a_1,a_2):
a_1\in A_1
\wedge 
a_2\in A_2
\wedge 
(a_2,a_1)\in R^{\mathcal M_\ell}_2 
\bigr\},
\end{align*}
etc.

For~(b), the proof is symmetric: we only replace 
$I$ with~$J$, initial segments with final ones, 
and $xR^{\mathcal M_\ell}_2y$ 
with $yR^{\mathcal M_\ell}_2x$. 
\end{proof}

These four lemmas complete the proof of 
Proposition~\ref{properties phi}.
\end{proof}

%\newpage 

Now everything is ready in order to provide 
a~sentence~$\psi$ having the required property.

\begin{dfn}\label{definition psi}
Let $\psi$ be the following sentence in~$\tau$: 
\begin{gather*}
(\forall x_1)
(\forall x_2)\,
\bigl(P_1(x_1)\wedge P_1(x_2)\wedge x_1\ne x_2
\qquad\qquad\qquad\qquad\qquad\qquad
\\
\qquad\qquad\qquad\qquad\qquad\qquad
\,\to\,
(\exists y)\,\varphi_2(y)\wedge
R_1(x_1,y)\wedge\neg\,R_1(x_2,y)
\bigr).
\end{gather*}
\end{dfn}

%\begin{dfn}\label{definition psi}
%Let $\psi$ be the following sentence in~$\tau$: 
%\begin{align*}
%(\forall x_1x_2)\,
%\bigl(P_1(x_1)\wedge P_1(x_2)\wedge x_1\ne x_2
%\,\to\,
%(\exists y)\,\varphi_2(y)\wedge
%R_1(x_1,y)\wedge\neg\,R_1(x_2,y)
%\bigr).
%\end{align*}
%\end{dfn}

\begin{prop}\label{properties psi}
Let $\ell\in\{1,2\}$. Then 
\begin{align*}
\mathcal N_\ell\vDash\psi
\;\;\text{if and only if}\;\;
\ell=1.
\end{align*}
\end{prop}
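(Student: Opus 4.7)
The plan is to compute the truth of $\psi$ semantically in $\mathcal N_\ell$ by analyzing its inner existential quantifier. By Proposition~\ref{properties phi}, the clause $\varphi_2(y)$ forces $y=j_\ell(A)$ for some $A\in P^{\mathcal M_\ell}_2$. For such a $y$, unpacking Definition~\ref{ultrafilter extension}(c) and reducing $\forall^{j_\ell(A)}x_2$ to substitution of~$A$ (as noted after Definition~\ref{ultrafilter quantifier}) gives, for every $D\in P^{\mathcal N_\ell}_1$,
$$
\mathcal N_\ell\vDash R_1(D,j_\ell(A))
\;\Leftrightarrow\;
A^*\in D,
\quad\text{where }\;
A^*=\{a\in M_\ell:R_1^{\mathcal M_\ell}(a,A)\}.
$$
Consequently, $\mathcal N_\ell\vDash\psi$ is equivalent to the assertion that the family
$$
\mathcal F_\ell=\{A^*:A\in P^{\mathcal M_\ell}_2\}\subseteq\mathcal P(P^{\mathcal M_\ell}_1)
$$
separates every pair of distinct ultrafilters in $P^{\mathcal N_\ell}_1$.

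For $\ell=1$, since $R_1^{\mathcal M_1}$ is membership restricted to $\mathbb N\times\mathcal P(\mathbb N)$, we have $A^*=A$ for every $A\in\mathcal P(\mathbb N)$, so $\mathcal F_1=\mathcal P(\mathbb N)$. Any two distinct members of $P^{\mathcal N_1}_1$ are ultrafilters on~$M_1$ each containing~$\mathbb N$, so they differ on some $A\subseteq\mathbb N$; this yields the required witness and $\mathcal N_1\vDash\psi$.

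For $\ell=2$, I would use a pure cardinality count. The map $A\mapsto A^*$ gives $|\mathcal F_2|\le|P^{\mathcal M_2}_2|=\lambda$, so the restriction map
$$
P^{\mathcal N_2}_1\to\{0,1\}^{\mathcal F_2},
\qquad
D\mapsto\bigl(A^*\mapsto\chi_D(A^*)\bigr),
$$
has codomain of cardinality at most $2^\lambda$. But $|P^{\mathcal M_2}_1|=\lambda\ge 2^{\aleph_0}$ is infinite, and ultrafilters in $P^{\mathcal N_2}_1$ correspond bijectively to ultrafilters on $P^{\mathcal M_2}_1$, of which there are $2^{2^\lambda}$ (a classical theorem of Pospíšil--Hausdorff). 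Since $2^{2^\lambda}>2^\lambda$, the restriction map cannot be injective, yielding distinct $D_1,D_2\in P^{\mathcal N_2}_1$ agreeing on every member of $\mathcal F_2$. These witness $\mathcal N_2\not\vDash\psi$ when plugged in as $x_1,x_2$.

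The main obstacle is the preliminary semantic reduction---carefully unpacking $\forall^{D}x_1$ and $\forall^{j_\ell(A)}x_2$ and confirming that the truth of $\psi$ is exactly a separation condition on $\mathcal F_\ell$. Once that is done, the asymmetry between $\ell=1$ and $\ell=2$ is entirely driven by the routine inequality $2^\lambda<2^{2^\lambda}$: in $\mathcal M_1$, the witnessing family $\mathcal P(\mathbb N)$ is large enough to separate all ultrafilters on $\mathbb N$, whereas in $\mathcal M_2$ the witnessing family has been forcibly cut down to cardinality~$\lambda$ while the set of ultrafilters to be separated has grown to size $2^{2^\lambda}$.
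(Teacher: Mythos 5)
Your proposal is correct and follows essentially the same route as the paper: witnesses for the inner existential are forced by Proposition~\ref{properties phi} to be principal ultrafilters $j_\ell(A)$ with $A\in P^{\mathcal M_\ell}_2$, the atomic formula $R_1(D,j_\ell(A))$ reduces to $A^*\in D$, the case $\ell=1$ is handled by choosing a set in $D_1\setminus D_2$ inside $\mathbb N$, and the case $\ell=2$ by the count $2^{2^\lambda}>2^\lambda$ showing two distinct ultrafilters must agree on all sets $A^*$. Your restriction map $D\mapsto(\chi_D(A^*))_{A^*\in\mathcal F_2}$ is exactly the paper's map $G$ up to identifying subsets of $P^{\mathcal M_2}_2$ with their characteristic functions.
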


\begin{proof}
1. 
First we show that $\mathcal N_1\vDash\psi$.

Let $D_1,D_2$ satisfy the antecedent of~$\psi$, i.e., 
$D_1,D_2\in P^{\mathcal N_1}_1$ and $D_1\ne D_2$. 
We should find $b\in N_1$ such that
$$
\mathcal N_1\vDash
\varphi_2(b)\wedge
R_1(D_1,b)\wedge\neg\,R_1(D_2,b).
$$

Since $D_1,D_2$ are distinct ultrafilters on~$M_1$ such that 
$P^{\mathcal M_1}_1\in D_1\cap D_2$, we can choose
$A_1\subseteq P^{\mathcal M_1}_1$ such that 
$A_1\in D_1$ and $A_1\notin D_2$. 
Then $A_1\in P^{\mathcal M_1}_2$ clearly follows 
from Definition~\ref{model 1}(b),(c). 
So $b=j_1(A_1)\in P^{\mathcal N_1}_2$, and hence, 
by the ``if" part of Proposition~\ref{properties phi},
$\mathcal N_1\vDash\varphi_2(b)$.

It remains to show the conjunction 
$$
(D_1,b)\in R^{\mathcal N_1}_1 
\;\;\text{and}\;\;
(D_2,b)\notin R^{\mathcal N_1}_1.
$$
To this end, note that for any ultrafilter~$D$ concentrated 
on $P^{\mathcal M_1}_1$ and any $A\in P^{\mathcal M_1}_2$, 
by Definition~\ref{ultrafilter extension}(c), the formula 
$(D,j_1(A))\in R^{\mathcal N_1}_1$ means
$$
(\forall^Dn)(\forall^{j(A)}B)\,
(n,B)\in R^{\mathcal M_1}_1.
$$
Recalling that $R^{\mathcal M_1}_1$~is the membership relation 
(Definition~\ref{model 1}(d)) and reducing~$(\forall^{j(A)}B)$, 
we see that the latter formula is equivalent to 
$(\forall^Dn)\,n\in A$, and so, to $A\in D$.
Since we have $A_1\in D_1$ and $A_1\notin D_2$, 
this gives the required conjunction.

%\newpage

2. 
Now we show that $\mathcal N_2\vDash\neg\,\psi$.

Define a~function~$G$ from $P^{\mathcal N_2}_1$ 
into $\mathcal P(P^{\mathcal M_2}_2)$ as follows:
$$
G(D)=
\bigl\{b\in P^{\mathcal M_2}_2:
\bigl\{a\in P^{\mathcal M_2}_1:
(a,b)\in R^{\mathcal M_2}_1\bigr\}\in D
\bigr\}.
$$
Recall that 
$|P^{\mathcal M_2}_1|=|P^{\mathcal M_2}_1|=\lambda$
(Proposition~\ref{model 2}). Therefore,
$$
|\dom(G)|=
|\scc(|P^{\mathcal M_2}_1|)=
|\scc(\lambda)|=
2^{2^\lambda}>2^\lambda,
$$
while
$$
|\ran(G)|\le
|\mathcal P(P^{\mathcal M_2}_2)|=
|\mathcal P(\lambda)|=2^\lambda,
$$
whence we conclude that $G$~is not one-to-one.

Take $S\in\mathcal P(P^{\mathcal M_2}_2)$ such that 
$|G^{-1}(S)|>1$, pick $D_1,D_2\in G^{-1}(S)$ such that 
$D_1\ne D_2$, and show that $D_1,D_2$ witness the failure 
of the sentence~$\psi$.

Note that $\mathcal N_2$ satisfies the antecedent of~$\psi$, 
i.e.,
$$
\mathcal N_2\vDash
P_1(D_1)\wedge P_1(D_2)\wedge D_1\ne D_2,
$$
by the condition 
$D_1,D_2\in G^{-1}(S)\subseteq P^{\mathcal N_2}_1$.
So to finish, it suffices to show 
$$
\mathcal N_2\vDash
\neg\,(\exists y)\,
\varphi_2(y)\wedge R_1(D_1,y)\wedge\neg\,R_1(D_2,y).
$$

Toward a~contradiction, assume that there is $b\in N_2$ 
such that
$$
\mathcal N_2\vDash
\varphi_2(b)\wedge R_1(D_1,b)\wedge\neg\,R_1(D_2,b).
$$
But since $\mathcal N_2\vDash\varphi_2(b)$, 
by the ``only if" part of Proposition~\ref{properties phi},
we see that $b=j_2(A)$ for some $A\in P^{\mathcal M_2}_2$. 
So we obtain
$$
R^{\mathcal N_2}_1(D_1,j_2(A)) 
\;\;\text{and}\;\;
\neg\,R^{\mathcal N_2}_1(D_2,j_2(A))
$$

By Definition~\ref{ultrafilter extension}(c), 
$R^{\mathcal N_2}_1(D_1,j_2(A))$ means
$
(\forall^{D_1}a)(\forall^{j_2(A)}b)\,
(a,b)\in R^{\mathcal M_2}_1,
$
whence reducing~$(\forall^{j_2(A)}b)$ we get 
$(\forall^{D_1}a)\,(a,A)\in R^{\mathcal M_2}_1$,
i.e., 
$$
\bigl\{a\in P^{\mathcal M_2}_1:
(a,A)\in R^{\mathcal M_2}_1\bigr\}
\in D_1.
$$
Similarly,
$R^{\mathcal N_2}_1(D_2,j_2(A))$ is equivalent to
$
\{a\in P^{\mathcal M_2}_1:
(a,A)\in R^{\mathcal M_2}_1\}
\in D_2,
$
and hence,
$\neg\,R^{\mathcal N_2}_1(D_2,j_2(A))$ is equivalent to
$$
\bigl\{a\in P^{\mathcal M_2}_1:
(a,A)\in R^{\mathcal M_2}_1\bigr\}
\notin D_2.
$$
Therefore, $A\in G(D_1)$ and $A\notin G(D_2)$, which,
however, contradicts to the choice of $D_1,D_2$.

This completes the proof.
\end{proof}

So we have constructed two models 
$\mathcal M_1$, $\mathcal M_2$ in~$\tau$ with 
$$
\mathcal M_1\prec\mathcal M_2
$$ 
and 
a~$\tau$-sentence~$\psi$ such that 
$\mathcal N_1=\scc(\mathcal M_1)\vDash\psi$ and 
$\mathcal N_2=\scc(\mathcal M_2)\vDash\neg\,\psi$,
thus witnessing 
$$
\scc(\mathcal M_1)\not\equiv\scc(\mathcal M_2).
$$ 
This proves the Main Theorem (Theorem~\ref{main theorem}).

%\newpage 

%\section{Problems}
%
%We conclude with the following natural questions 
%on possible general results in this direction. 
%
%\begin{prbl}
%Characterize (or at least, provide interesting 
%necessary or sufficient conditions on) theories~$T$ 
%such that the implication 
%$$
%\mathcal M_1\equiv\mathcal M_2
%\;\;\Rightarrow\;\;
%\scc(\mathcal M_1)\equiv\scc(\mathcal M_2)
%$$
%holds for all $\mathcal M_1,\mathcal M_2\vDash T$.
%\end{prbl}
%
%\begin{prbl}
%The same question for elementary embeddings.
%\end{prbl}

%\newpage

\vskip+2em

\begin{footnotesize} 

\noindent
{\sc
The Russian Academy of Sciences, 
Institute for Information Transmission Problems, 
Bolshoy Karetny per.~19, build.~1, 
Moscow 127051 Russia
\/}
\\
{\it E-mail address:\/} 
d.i.saveliev@iitp.ru, 
d.i.saveliev@gmail.com

\vskip+0.5em

\noindent
{\sc
Einstein Institute of Mathematics, 
%Edmond J.~Safra Campus, Givat Ram, 
The Hebrew University of Jerusalem, 
\\Jerusalem 9190401 Israel, 
and 
%Department of Mathematics, 
%Hill Center Busch Campus, 
Rutgers, 
The State University of New Jersey, 
110 Frelinghuysen Road, 
Piscataway, NJ 08854-8019 USA
\/}
\\
{\it E-mail address:\/} 
shelah@math.huji.ac.il

\end{footnotesize}


\begin{thebibliography}{111}
\bibitem{Saveliev} %%2
D.\,I.~Saveliev.
{\it Ultrafilter extensions of models.\/}
Lecture Notes in CS~6521 (2011), Springer, 162--177. 

\bibitem{Saveliev(inftyproc)} %%3
D.\,I.~Saveliev.
{\it On ultrafilter extensions of models.\/}
In: S.-D.~Friedman et~al.~(eds.).
The Infinity Project Proc. 
CRM~Documents~11, Barcelona, 2012, 599--616.

\bibitem{Hindman Strauss} %%9
N.~Hindman, D.~Strauss.
{\it Algebra in the Stone--\v{C}ech compactification.\/}
Second~ed., revised and expanded, 
W.~de~Gruyter, Berlin--N.Y., 2012.

\bibitem{Comfort Negrepontis}
W.\,W.~Comfort, S.~Negrepontis.
{\it The theory of ultrafilters.\/}
Springer, Berlin, 1974.

\bibitem{Chang Keisler} %%12
C.\,C.~Chang, H.\,J.~Keisler.
{\it Model theory.\/}
North-Holland, Amsterdam--London--N.Y., 1989. 

\bibitem{Saveliev(orders)} %%23
D.\,I.~Saveliev.
{\it Ultrafilter extensions of linearly ordered sets.\/}
Order 32:1 (2015), 29--41. 

\end{thebibliography}
\end{document}